\newcommand{\mynewtheorem}[2]{
 \newaliascnt{#1}{dummy}
 \newtheorem{#1}[#1]{#2}
 \aliascntresetthe{#1}
 % maybe we will squish some autoref defaults, but who cares?
 \expandafter\def\csname #1autorefname\endcsname{#2}
}
\theoremstyle{plain}
\theoremstyle{definition}
\theoremstyle{remark}
\newcommand{\Z}{\mathbb{Z}}
\newcommand{\cM}{\mathcal{M}}
\title{Relations for quadratic Hodge integrals via stable maps}
\author{Georgios Politopoulos}
\address{Laboratoire AGM, 2 avenue Adolphe Chauvin, 95300, Cergy-Pontoise, France}
\address{Mathematical Institute, Leiden University, PO Box 9512, 2300 RA Leiden, The Netherlands}
\email{g.politopoulos@math.leidenuniv.nl}
\date{\today}
\begin{document}

\maketitle

\begin{abstract} Following Faber-Pandharipande, we use the virtual localization formula for the moduli space of stable maps to $\mathbb{P}^1$ to compute relations between Hodge integrals. We prove that certain generating series of these integrals are polynomials.
\end{abstract}

Let $\overline{\cM}_{g,n}$ be the moduli space of $n$-pointed genus $g$ stable curves. It is a proper
smooth  Deligne Mumford (DM) stack of dimension $3g-3+n$. We denote by $\pi:\overline{\mathcal{C}}_{g,n}\to \overline{\cM}_{g,n}$ the universal curve and by $\sigma_i: \overline{\cM}_{g,n}\to \overline{\mathcal{C}}_{g,n}$ the sections associated to the marking $i$ for all $1\leq i\leq n$. We denote by $\omega_{\overline{\mathcal{C}}_{g,n}/ \overline{\cM}_{g,n}}$ the relative dualizing sheaf of $\pi$. We will consider the following classes in $A^*(\overline{\cM}_{g,n})$:
\begin{itemize}
    \item For all $0\leq i\leq g$, $\lambda_i$ stands for the $i$-th Chern class of the Hodge bundle, i.e. the vector bundle $\mathbb{E}=\pi_*\omega_{\overline{\mathcal{C}}_{g,n}/ \overline{\cM}_{g,n}}$. For all $\alpha\in \mathbb{C}$, we denote $\Lambda_g(\alpha)=\sum_{j=0}^g\alpha^{g-j} λ_j$, and  $\Lambda^\vee_g(\alpha)=(-1)^g\Lambda_g(-\alpha)$. \footnote{Here we use the convention of ~\cite{FabPan} for $Λ^\vee_{g}(α)$ and  $Λ_{g}(α)$}
    \item For all $1\leq i\leq n$, we denote $\psi_i$ the Chern class of the cotangent line at the $i$th marking $\mathcal{L}_i=\sigma_i^*(\omega_{\overline{\mathcal{C}}_{g,n}/ \overline{\cM}_{g,n}})$. 
\end{itemize}
A {\em Hodge integral} is an intersection number of the form:
$$
\int_{\overline{\cM}_{g,n}} \psi_1^{k_1}\ldots \psi_n^{k_n}\Lambda_g(t_1)\ldots \Lambda_g(t_m),
$$
where $k_1,\ldots,k_n$ are non-negative integers and $t_1,\ldots,t_m$ are complex numbers. If $m=1,2,$ or 3, then the above integral is called a linear, double, or triple Hodge integrals respectively. Relations between linear Hodge integrals where proved in~\cite{FabPan} using the Gromov-Witten theory of $\mathbb{P}^1$ and the localization formula of~\cite{GraPan1}. This approach was also used in~\cite{FabPan1} and~\cite{TiaZho} to prove certain properties of triple Hodge integrals. Linear and triple Hodge integrals naturally appeared in the GW-theory of Calabi-Yau 3-folds, thus explaining a more abundant literature on the topic. However, double Hodge integrals have appeared recently in the Quantization of Witten-Kontsevich generating series (see~\cite{Blo}),  in the theory of spin Hurwitz numbers (see~\cite{GiaKraLew}), and in the GW theory of blow-ups of smooth surfaces (see ~\cite{Giacchetto2022TheSG}). 

In the present note, we consider the following power series in $\mathbb{C}[\alpha][\![t]\!]$ defined using double Hodge integrals
\begin{equation}\label{pol}
    P_a(α,t)=\sum_{g\geq0} t^{g}\left( \int_{\overline{\cM}_{g,n+1}}\!\!\!\!\!\!\!\!\!\!\!\!
    \frac{Λ_{g}^{\vee}(1)Λ_{g}^{\vee}(α)}{1-ψ_0}\prod_{i=1}^{n} {(2a_i+1)!!(-4ψ_i)^{a_i}}{}\right)
   \text{exp}\left(\frac{t}{24}\right)
\end{equation}
where $a=(a_1,\ldots,a_n)$ is a vector of non-negative integers. If $n=1$, we use the convention: $\int_{\overline{\cM}_{0,2}} \psi_1^a \frac{Λ_{g}^{\vee}(1)Λ_{g}^{\vee}(α))}{1- \psi_2}=(-1)^a$.

\begin{theorem}\label{th:main}
$P_a(α,t)$ is a monic polynomial in $\mathbb{C}[\alpha][t]$ of degree $|a|$ in $t$.
\end{theorem}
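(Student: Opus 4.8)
The plan is to follow the Faber--Pandharipande strategy: realize $P_a(\alpha,t)$ (up to the $\exp(t/24)$ correction) as a sum of contributions of connected components of the fixed locus in the virtual localization formula for $\overline{\mathcal{M}}_{g}(\mathbb{P}^1, d)$, for a suitable choice of $d$ (presumably $d=1$, so that the relevant graphs have a very simple shape) together with an appropriate choice of equivariant insertions. Concretely, I would work with the $\mathbb{C}^*$-action on $\mathbb{P}^1$ and consider an equivariant integral whose non-equivariant limit (or the coefficient of some fixed power of the equivariant parameter) vanishes for dimension reasons; expanding the localization contributions produces a relation in which one side is a polynomial expression and the other side is, after resummation over $g$, exactly $P_a(\alpha,t)$. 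The insertions $\prod_i (2a_i+1)!!(-4\psi_i)^{a_i}$ and the stable-graph combinatorics governing $\Lambda_g^\vee(1)\Lambda_g^\vee(\alpha)/(1-\psi_0)$ should match the Hodge-integral form in~\eqref{pol}; the factor $\exp(t/24)$ and the $\overline{\mathcal{M}}_{0,2}$ convention are precisely the standard bookkeeping for unstable vertices and the genus-$0$ part, as in~\cite{FabPan}.

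First I would set up the localization computation carefully: fix the torus action, enumerate the fixed-point graphs contributing to the chosen equivariant integral, and write each vertex/edge contribution in terms of Hodge and $\psi$-classes on the $\overline{\mathcal{M}}_{g_v, n_v}$ attached to vertices. Second, I would isolate the ``special'' vertex carrying the full genus and the markings (this is where $\Lambda_g^\vee(1)\Lambda_g^\vee(\alpha)$ and the $1/(1-\psi_0)$ come from, via the edge smoothing parameter and the Mumford relation $\Lambda_g^\vee(\alpha)\Lambda_g(\alpha)$-type identities), and show that summing the remaining graph contributions over all configurations, weighted by $t^g$, produces the series $P_a(\alpha,t)$ on one side. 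Third, on the other side the same equivariant integral is manifestly a polynomial in $t$: the relation comes from the fact that a certain equivariant class has no component in the ``wrong'' cohomological degree, which forces the generating series to truncate. Tracking degrees then gives that the polynomial has degree exactly $|a|=\sum a_i$ in $t$, and monicity follows by identifying the top-degree coefficient with a single explicit (string/dilaton-type) Hodge integral that evaluates to $1$.

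The main obstacle, as usual in this circle of ideas, will be the precise combinatorial bookkeeping: matching the normalization constants $(2a_i+1)!!(-4)^{a_i}$, the $1/24$ in the exponential, and the $\overline{\mathcal{M}}_{0,2}$ convention with the edge- and vertex-weights coming out of the virtual localization formula, and making sure the resummation over graphs and over $g$ converges to exactly \eqref{pol} with no stray factors. A secondary subtlety is handling the unstable vertices (genus $0$ with one or two special points) and the automorphisms of graphs correctly, since these are exactly what produces the exponential prefactor. Once the dictionary between graph sums and $P_a$ is pinned down, polynomiality and the degree/monicity claims should follow quickly from the vanishing input and a direct evaluation of the leading term.
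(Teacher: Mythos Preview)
Your overall strategy---virtual localization for degree $1$ stable maps to $\mathbb{P}^1$, with the GW invariant vanishing for dimension reasons when $g>|a|$---is exactly the paper's approach. But several of the specifics in your plan are off, and two of the missing inputs are what actually make the argument close.

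First, the fixed-locus structure: for $d=1$ the localization graphs have precisely \emph{two} vertices over the two fixed points, and the genus is distributed as $g_1+g_2=g$. There is no single ``special vertex carrying the full genus''; rather, the vertex over $0$ carries all the markings and genus $g_1$ (producing $\Lambda_{g_1}^\vee(1)\Lambda_{g_1}^\vee(\alpha)/(1-\psi_0)$ together with the $\psi_i^{a_i}$), while the vertex over $\infty$ carries genus $g_2$ and contributes $\int_{\overline{\cM}_{g_2,1}}\Lambda_{g_2}^\vee(1)\Lambda_{g_2}^\vee(-(\alpha+1))/(1-\psi_0)$. Second, the exponential $\exp(t/24)$ does \emph{not} arise from unstable-vertex bookkeeping or automorphisms: it appears because the genus-$g_2$ contribution is independent of $\alpha$ and equals $(-1)^{g_2}\int_{\overline{\cM}_{g_2,1}}\psi_0^{3g_2-2}$, by the Tian--Zhou identity $\sum_{g\geq 0}t^{2g}\int_{\overline{\cM}_{g,1}}\Lambda_g^\vee(1)\Lambda_g^\vee(\alpha)/(1-\psi_0)=\exp(-t^2/24)$ combined with the Faber--Pandharipande formula $\sum_g t^g\int_{\overline{\cM}_{g,1}}\psi_0^{3g-2}=\exp(t/24)$. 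Without this step you cannot decouple the two vertices and recognize the product as $P_a(\alpha,t)$.

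Third, monicity is not a string/dilaton computation. The coefficient of $t^{|a|}$ is the genuine GW invariant $\prod_i(2a_i+1)!!(-4)^{a_i}\,\langle\prod_i\tau_{a_i}(\omega)\mid y\rangle_{|a|,1}^{\mathbb{P}^1}$, and its value $1$ is imported from Kiem--Li's evaluation of this invariant; the double-factorial prefactors are chosen precisely to normalize it. You should cite that result rather than expect an elementary evaluation.
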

Here we provide the first values of $P_a(-α-1,t)$. In the list below we omit the variables $-α-1$ and $t$ in the notation:
\begin{eqnarray*}
&&P_{()}= 1\\
\\
&&P_{(1)}= t+12\\
\\
&&P_{(2)}=t^2 - 10 \alpha t + 240  \\
&&P_{(1,1)}=t^2 - 12 t   \\
\\
&&P_{(3)}=t^3 + (-77/3 \alpha - 28) t^2 + 280 t +6720    \\
&&P_{(2,1)}= t^3 + (-10\alpha - 48)t^2 +(240\alpha + 240)t  \\
&&P_{(1,1,1)}=t^3 - 72t^2+432t   \\
\\
&& P_{(4)}=t^4+  (-43\alpha - 72)t^3 + (126\alpha^2 + 756\alpha + 840)t^2  + 10080t +  241920 \\
&& P_{(3,1)}= t^4+ (-77/3 \alpha - 100) t^3+ (1232 \alpha + 1624) t^2  \\
&& P_{(2,2)}=t^4+  (20\alpha + 100)t^3+(-100\alpha^2 - 1360\alpha - 1680)t^2  \\
&& P_{(2,1,1)}=t^4+ (-10\alpha - 132)t^3 + (840\alpha + 3120)t^2 +(-8640\alpha - 8640)t 
\\
&& P_{(1,1,1,1)}=t^4 - 168t^3   + 5616t^2 -20736t. \\
\end{eqnarray*}
Considering these first values, we conjecture that $P_a$ is a polynomial of total degree $|a|$ in both variables $t$ and $α$.

\section*{Acknowledgements} I am very grateful to my PhD advisor Adrien Sauvaget
for introducing me to this problem and for his guidance and comments throughout
the whole writing of this note.

\section{Preliminaries}

We denote by $\overline{\cM}_{g,n}(\mathbb{P}^1,1)$ the moduli space of stable maps of degree $1$ to
$\mathbb{P}^1$. It is a proper DM stack of virtual dimension $2g+n$. 
Here we can define in an analogous way 
the Hodge bundle $\mathbb{E}$, the cotangent line bundles $\mathcal{L}_i$ and we denote again 
$λ_i$ and $ψ_i$ the respective Chern classes. We also have
the forgetful and evaluation maps
\begin{equation*}
    π\colon \overline{\cM}_{g,n+1}(\mathbb{P}^1,1)\to \overline{\cM}_{g,n}(\mathbb{P}^1,1),\ \text{and} \ \ \ 
    ev_i\colon \overline{\cM}_{g,n+1}(\mathbb{P}^1,1)\to \mathbb{P}^1.
\end{equation*}
Throughout this
note the enumeration of markings starts from $0$. Furthermore, $\pi$ is  the morphism that forgets the marking $p_0$ and $ev_i$ is the evaluation of a stable map to the $i$-th marked point.
The vector bundle $T:=R^1π_*(ev_0^*\mathcal{O}_{\mathbb{P}^1}(-1)))$ is of rank $g$ and we denote by $y$
its top Chern class.  We will denote: 
\begin{equation*}
    \langle \prod_{i=0}^{n-1}τ_{a_i}(ω)|y\rangle_{g,1}^{\mathbb{P}^1}:= \int_{[\overline{\cM}_{g,n}(\mathbb{P}^1,1)]^{vir}}\prod_{i=0}^{n-1}ψ_i^{a_i}ev_i^*(ω)y
\end{equation*}
where $ω$ denotes the class of a point in $\mathbb{P}^1$. 
\begin{theorem}[Localization Formula,~\cite{GraPan1}, ~\cite{FabPan}]
Let $g\in\mathbb{Z}_{\geq0}$, let $a\in \mathbb{Z}^n_{\geq0}$ such that $|a|\leq g$.  
Then, for  all complex numbers ${α}$, and $t\in \mathbb{C}^*$, we have
\begin{align*}
    \langle \prod_{i=1}^nτ_{a_i}(ω)|y\rangle_{g,1}^{\mathbb{P}^1}=
    \sum_{g_1+g_2=g}
    &\int_{\overline{\cM}_{g_1,n+1}}t^{n}\prod_{i=1}^nψ_i^{a_i}
    \frac{Λ^{\vee}_{g_1}(t)Λ_{g_1}^{\vee}(αt)}{t(t-ψ_0)}\ \\
    &\times\int_{\overline{\cM}_{g_2,1}}
    \frac{Λ_{g_2}^{\vee}(-t)Λ_{g_2}^{\vee}((α+1)t)}{-t(-t-ψ_0)}
\end{align*}
\end{theorem}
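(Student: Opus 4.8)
The plan is to deduce the formula from the Graber--Pandharipande virtual localization theorem applied to the $\mathbb{C}^*$-action on $\overline{\cM}_{g,n}(\mathbb{P}^1,1)$ induced by the standard action on $\mathbb{P}^1$, whose fixed points I call $p_0$ and $p_\infty$, with tangent weights $t$ and $-t$. First I would fix the equivariant data: I linearize $\mathcal{O}_{\mathbb{P}^1}(-1)$ so that its fibre has weight $\alpha t$ at $p_0$; the Atiyah--Bott relation $\frac{w_0}{t}+\frac{w_\infty}{-t}=\deg\mathcal{O}(-1)=-1$ then forces the weight at $p_\infty$ to be $(\alpha+1)t$. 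I also lift the point class $\omega$ to the equivariant class $[p_0]$, which restricts to $t$ at $p_0$ and to $0$ at $p_\infty$. With these choices the left-hand side becomes an equivariant integral with parameter $t$, and every factor $ev_i^*(\omega)$ forces its marking to lie over $p_0$, so that any fixed locus carrying a $\tau$-marking over $p_\infty$ drops out.

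Because the degree is $1$, each surviving localization graph consists of a single edge covering $\mathbb{P}^1$ once, with two vertices supporting contracted curves of genera $g_1$ and $g_2$ over $p_0$ and $p_\infty$. The corresponding fixed loci are therefore $\overline{\cM}_{g_1,n+1}\times\overline{\cM}_{g_2,1}$ indexed by $g_1+g_2=g$, the extra marking on the first factor and the single marking on the second being the two nodes of the edge. On such a locus each $ev_i^*(\omega)$ restricts to $t$, giving the global factor $t^n$, and each $\psi_i$ restricts to the honest $\psi$-class on $\overline{\cM}_{g_1,n+1}$. To restrict $y=c_g(T)$ I would run the normalization sequence of the domain $C=C_1\cup E\cup C_2$ tensored with $f^*\mathcal{O}(-1)$; since $H^0(\mathbb{P}^1,\mathcal{O}(-1))=H^1(\mathbb{P}^1,\mathcal{O}(-1))=0$ the edge contributes nothing, and $T$ restricts to $\mathbb{E}^\vee_{g_1}\otimes\mathbb{C}_{\alpha t}\oplus\mathbb{E}^\vee_{g_2}\otimes\mathbb{C}_{(\alpha+1)t}$, whence $y$ restricts to $\Lambda^\vee_{g_1}(\alpha t)\,\Lambda^\vee_{g_2}((\alpha+1)t)$ by the identity $\Lambda^\vee_g(x)=c_g(\mathbb{E}^\vee\otimes\mathbb{C}_x)$.

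It then remains to compute $1/e(N^{vir})$. Reading the deformation--obstruction theory off $H^0$ and $H^1$ of $f^*T\mathbb{P}^1$ on $C$ together with the node smoothings, the moving infinitesimal deformations of the map form $\mathbb{C}_t\oplus\mathbb{C}_{-t}$, the obstructions form $\mathbb{E}^\vee_{g_1}\otimes\mathbb{C}_t\oplus\mathbb{E}^\vee_{g_2}\otimes\mathbb{C}_{-t}$, and smoothing the two nodes contributes lines with equivariant Euler classes $t-\psi_0$ and $-t-\psi_0$. Hence $\frac{1}{e(N^{vir})}=\frac{\Lambda^\vee_{g_1}(t)}{t\,(t-\psi_0)}\cdot\frac{\Lambda^\vee_{g_2}(-t)}{(-t)\,(-t-\psi_0)}$. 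Multiplying this by $y$, by $t^n$ and by $\prod_i\psi_i^{a_i}$, and then summing over $g_1+g_2=g$, reproduces exactly the two integrands of the statement and hence the formula.

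The main obstacle is the equivariant bookkeeping rather than any single hard idea. I would need to check that the weight-$0$ part of $H^0(C,f^*T\mathbb{P}^1)$, coming from the reparametrisation of the edge fixing its two nodes, cancels against the automorphisms so that no spurious power of $t$ survives in $e(N^{vir})$; to match all signs through the conventions $\Lambda^\vee_g(x)=(-1)^g\Lambda_g(-x)$ and $c_j(\mathbb{E}^\vee)=(-1)^j\lambda_j$; and to confirm that the two linearization weights $\alpha t$ and $(\alpha+1)t$ produced by the normalization sequence are exactly compatible with the tangent weights $\pm t$ entering the Hodge factors. Finally I would treat the unstable and low-genus corners separately, in particular $g_1=0$ with a single extra marking, which is the case governed by the stated convention on $\overline{\cM}_{0,2}$; the hypothesis $|a|\le g$ ensures that the integrand stays within the virtual dimension $2g+n$ on each fixed locus so that all the intersection numbers are well defined.
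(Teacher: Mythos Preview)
The paper does not supply a proof of this theorem: it is stated as a cited preliminary result from \cite{GraPan1} and \cite{FabPan}, and the text moves directly to the next proposition. So there is no in-paper argument to compare against.

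That said, your sketch is the standard derivation, and it is the same route taken in the cited references. The identification of the fixed loci for degree~$1$ maps as $\overline{\cM}_{g_1,n+1}\times\overline{\cM}_{g_2,1}$, the choice of equivariant lift of $\omega$ to $[p_0]$ killing all graphs with a marked point over $p_\infty$, the linearization of $\mathcal{O}(-1)$ with fibre weights $\alpha t$ and $(\alpha+1)t$, the normalization-sequence computation of the restriction of $y$, and the Euler class of the virtual normal bundle are all correct as stated. The cancellation you flag between the weight-$0$ automorphisms of the edge and the weight-$0$ piece of $H^0(C,f^*T\mathbb{P}^1)$ is exactly the standard one, and the unstable cases ($g_2=0$ with a single node, and the $g_1=0,\,n=1$ corner handled by the convention on $\overline{\cM}_{0,2}$) are the only places requiring separate bookkeeping. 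Your outline would indeed reconstruct the formula; it is simply more detail than the paper chose to include for a quoted result.
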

\noindent 
Here we use the convention $\int_{\overline{\cM}_{0,1}}ψ_0^a=1$.

\begin{proposition}[4.1 of \cite{TiaZho}]
For all complex numbers $α$ we have 
\begin{equation*}
   F(α,t)=1+\sum_{g>0}t^{2g}\int_{\overline{\cM}_{g,1}}\frac{Λ_{g}^{\vee}(1)Λ_{g}^{\vee}(α)}{1-ψ_0}=\text{exp}
   \left(-\frac{t^2}{24}\right).
\end{equation*}
\end{proposition}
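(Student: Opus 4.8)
The plan is to prove the equivalent coefficient identity. Extracting the coefficient of $t^{2g}$ on both sides, and using that $\exp(-t^2/24)=\sum_{g\geq0}\frac{(-1)^g}{24^g\,g!}t^{2g}$ contains only even powers of $t$, the proposition is equivalent to
\begin{equation*}
I_g(\alpha):=\int_{\overline{\cM}_{g,1}}\frac{\Lambda_g^\vee(1)\Lambda_g^\vee(\alpha)}{1-\psi_0}=\frac{(-1)^g}{24^g\,g!}\qquad\text{for all }g\geq1\text{ and all }\alpha .
\end{equation*}
There are two independent things to establish: that $I_g(\alpha)$ does not depend on $\alpha$, and that its common value is $(-1)^g/(24^g\,g!)$. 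I would treat these separately.

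For the $\alpha$-independence I would reinterpret $I_g(\alpha)$ geometrically. Writing $\Lambda_g^\vee(s)=s^g c_{-1/s}(\mathbb{E})$ and invoking Mumford's relation $c(\mathbb{E})c(\mathbb{E}^\vee)=1$ (equivalently $\mathrm{ch}_{2l}(\mathbb{E})=0$ for $l\geq 1$), the factor $\Lambda_g^\vee(1)\Lambda_g^\vee(\alpha)$ is the product of two equivariant Euler classes of $R^1\pi_*(ev^*\mathcal{O}_{\mathbb{P}^1}(-1))$ taken for two different $\mathbb{C}^*$-linearizations of $\mathcal{O}_{\mathbb{P}^1}(-1)$, whose weight ratio is recorded by $\alpha$. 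Concretely, $I_g(\alpha)$ should appear as the contribution of the fixed locus in which the entire genus is concentrated over $\infty\in\mathbb{P}^1$ (the factor $1/(1-\psi_0)$ arising from the smoothing of the attaching node) to a virtual-localization computation of a fixed degree-$1$ integral over $\overline{\cM}_{g,1}(\mathbb{P}^1,1)$ whose value is intrinsic, i.e.\ insensitive to the linearization. Since $\alpha$ enters only through that choice of linearization, the $\alpha$-independence of $I_g(\alpha)$ follows from the linearization-invariance of the underlying non-equivariant invariant.

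To pin down the common value I would specialize $\alpha$ to a convenient value (e.g.\ $\alpha=0$, where $\Lambda_g^\vee(0)=(-1)^g\lambda_g$, or $\alpha=1$, where $\Lambda_g^\vee(1)^2=c(\mathbb{E}^\vee)^2=c(\mathbb{E})^{-2}$ by Mumford) and reduce to classical socle evaluations. After such a specialization the integrand is a polynomial in the $\lambda_j$ and $\psi_0$; pushing forward along the forgetful map $\overline{\cM}_{g,1}\to\overline{\cM}_g$ converts $1/(1-\psi_0)$ into a sum of $\kappa$-classes, and Mumford's Grothendieck–Riemann–Roch formula for $\mathrm{ch}(\mathbb{E})$ rewrites everything through $\kappa$- and boundary-$\psi$-classes with Bernoulli-number coefficients. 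The final step is to resum the resulting Bernoulli-type series over $g$; combined with the standard Faber–Pandharipande evaluations of the $\lambda_g$ and $\lambda_g\lambda_{g-1}$ Hodge integrals, this should collapse the series to $\exp(-t^2/24)$.

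I expect the $\alpha$-independence to be the main obstacle: making the localization bookkeeping precise enough to identify $I_g(\alpha)$ cleanly as a single fixed-locus contribution — with the correct normalization of the node-smoothing factor $1/(1-\psi_0)$ and of the equivariant weights — is exactly the point at which the seemingly accidental disappearance of $\alpha$ becomes transparent. As a cross-check I would verify the case $g=1$ by hand: there $\Lambda_1^\vee(1)\Lambda_1^\vee(\alpha)=\alpha-(1+\alpha)\lambda_1$ and $\int_{\overline{\cM}_{1,1}}\psi_0=\int_{\overline{\cM}_{1,1}}\lambda_1=1/24$, giving $I_1(\alpha)=\alpha/24-(1+\alpha)/24=-1/24$ for every $\alpha$, which matches the coefficient of $t^2$ in $\exp(-t^2/24)$ and confirms both the $\alpha$-independence and the target value in the base case.
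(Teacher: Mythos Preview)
The paper does not supply a proof of this proposition; it is quoted from~\cite{TiaZho} (their Proposition~4.1) and used as a black box in Section~2. There is therefore no in-paper argument to compare your attempt against.

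Your proposal is a plan rather than a proof, and you say so yourself. The overall shape --- obtain $\alpha$-independence from virtual localization on $\overline{\cM}_{g}(\mathbb{P}^1,1)$, then specialize $\alpha$ to compute the value --- is indeed the route taken in~\cite{TiaZho}, and it is also the $n=0$ limit of the computation the present paper carries out in Section~2: with no marked points the invariant $\langle\,|\,y\rangle_{g,1}^{\mathbb{P}^1}$ vanishes for $g>0$ by a virtual-dimension count, and the localization formula then yields a convolution identity between the coefficients of $F(\alpha,\cdot)$ and those of $F(-\alpha-1,\cdot)$. Turning that identity into the statement that each $I_g(\alpha)$ is constant in $\alpha$ still requires an argument (for instance an induction on $g$ using that $I_g$ is a polynomial in $\alpha$ of degree at most $g$), which you have not written down; the obstacle you flag is genuine.

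For the value, the specializations $\alpha=0$ and $\alpha=1$ you propose both lead to nontrivial Hodge integrals. The clean choice --- and the one implicit in the paper --- is $\alpha=-1$: with the paper's conventions Mumford's relation reads $\Lambda_g^\vee(1)\Lambda_g^\vee(-1)=(-1)^g$, hence
\[
I_g(-1)=(-1)^g\int_{\overline{\cM}_{g,1}}\psi_0^{\,3g-2}=\frac{(-1)^g}{24^g\,g!},
\]
using the evaluation $\sum_{g\geq0}t^{g}\int_{\overline{\cM}_{g,1}}\psi_0^{\,3g-2}=\exp(t/24)$ recalled at the start of the proof of Theorem~0.1. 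This gives $F(-1,t)=\exp(-t^2/24)$ directly and reduces the whole proposition to the $\alpha$-independence step. Your $g=1$ hand-check is correct.
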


\noindent Besides, we have the String and Dilaton equation for Hodge integrals.

\begin{proposition} Let $g,n\in\Z_{\geq0}$ such that $2g-2+n>0$.
\begin{enumerate}

    \item [(i)][Dilaton equation for Hodge integrals] Let $(a_1,...,a_{n})\in\Z_{\geq0}^{n}$
    and assume that there exist $i_0$ such that $a_{i_0}=1$. Then 
    
    \begin{equation*}
        \int_{\overline{\cM}_{g,n+1}}\frac{ψ_{i_0}\prod_{i\neq i_0}ψ_i^{a_i}\prod_{j=1}^gλ_k^{b_k}}{1-ψ_0}=
        (2g-2+n)
        \int_{\overline{\cM}_{g,n}}\frac{\prod_{i=1}^{n-1}ψ_i^{a_i}\prod_{j=1}^gλ_k^{b_k}}{1-ψ_0}.
    \end{equation*}
    
     \item [(ii)][String equation for Hodge integrals] Let $(a_1,...,a_{n})\in\Z_{\geq0}^{n}$ 
    and assume that there exist $i_0$ such that $a_{i_0}=0$. Then we have
    \begin{eqnarray*}
        \int_{\overline{\cM}_{g,n+1}}\frac{\prod_{i=1}^nψ_i^{a_i}\prod_{j=1}^gλ_k^{b_k}}{1-ψ_0}&=&
        \int_{\overline{\cM}_{g,n}}\frac{\prod_{i=1}^{n-1}ψ_i^{a_i}\prod_{j=1}^gλ_k^{b_k}}{1-ψ_0}\\&&+
        \sum_{j=1}^n\int_{\overline{\cM}_{g,n}}\frac{ψ_j^{a_j-1}\prod_{i\neq j}ψ_i^{a_i}\prod_{k=1}^gλ_k^{b_k}}{1-ψ_0}.
    \end{eqnarray*}
    
\section{The calculation}

\end{enumerate}
\end{proposition}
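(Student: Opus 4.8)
The plan is to deduce both identities from the geometry of the forgetful morphism $\pi\colon\overline{\cM}_{g,n+1}\to\overline{\cM}_{g,n}$ that drops the marking $i_0$ (so $i_0\neq 0$ and the factor $\tfrac{1}{1-\psi_0}$ is preserved on both sides), by the standard tautological calculus for such maps. Writing $\bar\psi_i,\bar\lambda_k$ for the corresponding classes on $\overline{\cM}_{g,n}$, the two inputs I would record first are: the Hodge classes pull back, $\lambda_k=\pi^*\bar\lambda_k$, so by the projection formula the monomial $\prod_k\lambda_k^{b_k}$ comes out of $\pi_*$ and the problem reduces to pushing the $\psi$-part forward; and for every marking $i\neq i_0$ (including $i=0$) one has $\psi_i=\pi^*\bar\psi_i+D_i$, where $D_i=D_{i,i_0}$ is the boundary divisor on which $i$ and $i_0$ sit on a rational bubble, subject to $D_iD_j=0$ for $i\neq j$ and $\psi_i|_{D_i}=\psi_{i_0}|_{D_i}=0$. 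An immediate induction from $\psi_iD_i=0$ gives the compact forms $\psi_i^{a}=\pi^*\bar\psi_i^{a}+\pi^*\bar\psi_i^{a-1}D_i$ for $a\geq 1$ and $\tfrac{1}{1-\psi_0}=(1+D_0)\,\pi^*\tfrac{1}{1-\bar\psi_0}$, each \emph{linear} in the relevant correction divisor. The pushforward facts I will use are $\pi_*\pi^*=0$, $\pi_*(\pi^*\beta\cdot D_i)=\beta$ (since $D_i$ is a section of $\pi$), and $\pi_*\psi_{i_0}=2g-2+n$.

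For the dilaton equation (part (i), where $a_{i_0}=1$) the decisive observation is $\psi_{i_0}D_i=0$ for every $i\neq i_0$. Multiplying the integrand by $\psi_{i_0}$ therefore annihilates every correction term, including the $D_0$ coming from the denominator, so the $\psi$-part collapses to $\psi_{i_0}\cdot\pi^*\!\big(\tfrac{1}{1-\bar\psi_0}\prod_{i\neq i_0}\bar\psi_i^{a_i}\big)$. The projection formula together with $\pi_*\psi_{i_0}=2g-2+n$ then produces precisely the factor $(2g-2+n)$ times the downstairs integral, which is the claim.

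For the string equation (part (ii), where $a_{i_0}=0$ and no $\psi_{i_0}$ is present) I would substitute the comparison formulas into $\tfrac{1}{1-\psi_0}\prod_{i\neq i_0}\psi_i^{a_i}$ and expand. Since each factor is linear in its own $D$ and $D_iD_j=0$ for $i\neq j$, at most one correction divisor survives in any monomial, so the expansion consists of the all-pullback term plus the single-$D_i$ terms. Applying $\pi_*$: the all-pullback term dies; the $D_0$-term yields $\tfrac{1}{1-\bar\psi_0}\prod_{i\neq i_0}\bar\psi_i^{a_i}$, which is exactly the first summand on the right-hand side (the geometric series being reproduced by the lowering carried by $D_0$); and each $D_j$-term with $j\neq i_0$, $a_j\geq 1$, gives $\tfrac{1}{1-\bar\psi_0}\bar\psi_j^{a_j-1}\prod_{i\neq j,\,i\neq i_0}\bar\psi_i^{a_i}$, reproducing the nonvanishing summands of $\sum_{j=1}^{n}$ (those with $a_j=0$, including $j=i_0$, contribute a negative power and are read as zero). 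Reinstating $\prod_k\bar\lambda_k^{b_k}$ by the projection formula and integrating over $\overline{\cM}_{g,n}$ completes the proof.

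The only genuinely delicate bookkeeping concerns the denominator $\tfrac{1}{1-\psi_0}$: one must check that in the string case its $D_0$-correction regenerates the \emph{full} series $\tfrac{1}{1-\bar\psi_0}$, so that no separate ``$\psi_0$-lowered'' term appears on the right, whereas in the dilaton case the extra factor $\psi_{i_0}$ kills that correction and the denominator simply pulls back. Everything else is routine tautological calculus; the stability hypothesis $2g-2+n>0$ ensures $\pi$ is a genuine forgetful morphism and validates $\pi_*\psi_{i_0}=2g-2+n$.
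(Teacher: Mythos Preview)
The paper does not supply its own proof of this proposition: it is stated in the Preliminaries section as a well-known fact (the string and dilaton equations, augmented by the observation that $\lambda$-classes pull back under forgetful maps) and is then used without further justification. Your argument is the standard one and is correct. The key ingredients you invoke --- the comparison $\psi_i=\pi^*\bar\psi_i+D_{i,i_0}$, the vanishing $\psi_{i_0}|_{D_{i,i_0}}=0$ and $D_iD_j=0$, the identity $\pi_*\psi_{i_0}=\kappa_0=2g-2+n$, and $\lambda_k=\pi^*\bar\lambda_k$ --- are exactly what one needs, and your bookkeeping of the denominator $\tfrac{1}{1-\psi_0}=(1+D_0)\,\pi^*\tfrac{1}{1-\bar\psi_0}$ is accurate. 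There is nothing to compare against in the paper itself; your write-up would serve as a complete proof of the proposition.
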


Note that the GW-invariant  $\langle \prod_{i=1}^nτ_{a_i}
(ω)|y\rangle_{g,1}^{\mathbb{P}^1}$ is 0 unless
    $|a|=g$ for dimensional reasons. Indeed,
    $\text{dim}_{\mathbb{C}}[\overline{\cM}_{g,n}
    (\mathbb{P}^1,1)]^{\text{vir}}=2g+n$ 
    and the cycle we are integrating is in codimension $g+|a|+n$. Using 
    the above localization formula, and  
    Lemma 2.1 of ~\cite{TiaZho} the intersection number $\langle 
    \prod_{i=1}^nτ_{a_i}(ω)|y\rangle_{g,1}^{\mathbb{P}^1}$ is expressed as:

    \begin{align*}
    &\sum_{g_1+g_2=g}\int_{\overline{\cM}_{g_1,n+1}}\!\!\!\!\!\!t^{n}\prod_{i=1}^nψ_i^{a_i}
    \frac{Λ_{g_1}^{\vee}(t)Λ_{g_1}^{\vee}(αt)}{t(t-ψ_0)}\ \cdot \int_{\overline{\cM}_{g_2,1}}
    \frac{Λ^{\vee}_{g_2}(-t)Λ_{g_2}^{\vee}((α+1)t)}{-t(-t-ψ_0)}\\
    =&
    \sum_{g_1+g_2=g} t^{|a|-g_1}(-t)^{-g_2}\int_{\overline{\cM}_{g_1,n+1}}
    \prod_{i=1}^{n}ψ^{a_i}\frac{Λ_{g_1}^{\vee}(1)Λ_{g_1}^{\vee}(α)}{1-ψ_0}
    \times \int_{\overline{\cM}_{g_2,1}}\frac{Λ_{g_2}^{\vee}(1)Λ_{g_2}^{\vee}(-(α+1))}{1-ψ_0} \\
    % =&
    % \sum_{g_1+g_2=g} t^{|a|-g}(-1)^{-g_2}\int_{\overline{\cM}_{g_1,n+1}}
    % \prod_{i=1}^{n}ψ^{a_i}\frac{Λ_{g_1}^{\vee}(1)Λ_{g_1}^{\vee}(α)}{1-ψ_0}
    % \times \int_{\overline{\cM}_{g_2,1}}\frac{Λ_{g_2}^{\vee}(1)Λ_{g_2}^{\vee}(-(α+1))}{1-ψ_0} \\
    =& \ t^{|a|-g}\sum_{g_1+g_2=g}\int_{\overline{\cM}_{g_1,n+1}}
    \prod_{i=1}^{n}ψ^{a_i}\frac{Λ_{g_1}^{\vee}(1)Λ_{g_1}^{\vee}(α)}{1-ψ_0}\cdot \int_{\overline{\cM}_{g_2,1}}ψ_0^{3g_2-2} %\\
    %=&(-1)^{n+g}t^{|a|-g}\sum_{g_1+g_2=g} \int_{\overline{\cM}_{g_1,n+1}}
    %\prod_{i=1}^{n}ψ^{a_i}\frac{Λ_{g_1}^{\vee}(1)Λ_{g_1}^{\vee}(-(α+1))}{1-ψ_0}\cdot \int_{\overline{\cM}_{g_2,1}}ψ_0^{3g_2-2}
    \end{align*}
\noindent In the last equation we used Proposition 1.2 in order 
to replace $\int_{\overline{\cM}_{g_2,1}}\frac{Λ_{g_2}^{\vee}
(1)Λ_{g_2}^{\vee}(-(α+1))}{1-ψ_0}$ with 
$(-1)^{g_2}\int_{\overline{\cM}_{g_2,1}}ψ_0^{3g_2-2}$.
    
We define 
\begin{equation*}
    A_{g,a}(\alpha) = \sum_{g_1+g_2=g}\int_{\overline{\cM}_{g_1,n+1}}
    \prod_{i=1}^{n}ψ^{a_i}\frac{Λ_{g_1}^{\vee}(1)Λ_{g_1}^{\vee}(α)}{1-ψ_0}\cdot \int_{\overline{\cM}_{g_2,1}}ψ_0^{3g_2-2}.
\end{equation*}
Then, we have 
   \begin{equation*}
A_{g,a}(\alpha)=  \left\{\begin{matrix}
0 &|a|<g \\ 
 \langle \prod_{i=1}^nτ_{a_i}(ω)|y\rangle_{g,1}^{\mathbb{P}^1} & |a|=g
\end{matrix}\right.    
\end{equation*}

\noindent By the definition of $Λ_g^{\vee}(t)$ we see that $Λ_g^{\vee}(1)Λ_g^{\vee}(-(α+1))$ is a 
polynomial in $α$ of degree $g$, which actually determines the degree of $A_g(α)$.

\newpage

We now present a proof for the main result.

\begin{proof}\  [of Theorem 0.1] We begin by stating the well known fact 
\begin{equation*}
    1 + \sum_{g>0}t^{g}\int_{\overline{\cM}_{g,1}}ψ_0^{3g-2} =\exp\left({\frac{t}{24}}\right)
\end{equation*}
proven in section 3.1 of \cite{FabPan}. Now, we consider the product of $\exp\left({\frac{t}{24}}\right)$
and 
\begin{equation*}
    \sum_{g\geq0} t^{g}\left( \int_{\overline{\cM}_{g,n+1}}\!\!\!\!\!\!\!\!\!\!\!\!
    \frac{Λ_{g}^{\vee}(1)Λ_{g}^{\vee}(α)}{1-ψ_0}\prod_{i=1}^{n} {(2a_i+1)!!(-4ψ_i)^{a_i}}{}\right)
\end{equation*}
to obtain a new power series whose coefficients in degree $g$ are given by
\begin{equation*}
    \sum_{g_1+g_2=g}\int_{\overline{\cM}_{g_1,n+1}}\prod_{i=1}^{n}(2a_i+1)!!(-4)^{a_i}
    \prod_{i=1}^{n}ψ^{a_i}\frac{Λ_{g_1}^{\vee}(1)Λ_{g_1}^{\vee}(α)}{1-ψ_0}\cdot \int_{\overline{\cM}_{g_2,1}}ψ_0^{3g_2-2}
\end{equation*}
This is exactly $A_{g,a}(α)\cdot\prod_{i=1}^{n}(2a_i+1)!!(-4)^{a_i}$.
Hence, we can rewrite the power series $P_a(α,t)$ in the form 
\begin{equation*}
    P_a(α,t)=\prod_{i=1}^{n}(2a_i+1)!!(-4)^{a_i}\sum_{g\geq0}t^{g}A_{g,a}(α)
\end{equation*}
As it is computed in the start of Section $2$ we have that the numbers $A_{g,a}(α)$ vanish when $g>|a|$. Hence, we get that all
coefficients of the power series $P_a(α,t)$ vanish when $g > |a|$, 
i.e. $P_a(α,t)$ is a polynomial of degree $|a|$. 
Furthermore, the top coefficient of $P_a(\alpha,t)$, i.e. the coefficient of $t^{|a|}$ is given by
\begin{equation*}
    \langle \prod_{i=1}^n (-4)^{a_i}(2a_i+1)!! τ_{a_i}(ω)|y\rangle_{|a|,1}^{\mathbb{P}^1}.
\end{equation*}
This value is computed in ~\cite{KieLi1} and is actually equal to $1$. 
In particular, the number $\prod_{i=1}^n (-4)^{a_i}(2a_i+1)!! $ is here 
to make the polynomial monic. 
\end{proof}

We now prove several other properties of the polynomials $P_a$.

\begin{proposition} The constant term $c_0$ of $P_a(α,t)$ is non zero if and only if $n=1$ where then
$c_0=(-1)^a\prod_{i=1}^n (-4)^{a_i}(2a_i+1)!!$ or if $n>1$ and $\sum_{i=1}^na_i\leq n-2$ where then 
\begin{equation*}
    c_0=\prod_{i=1}^n (-4)^{a_i}(2a_i+1)!!\frac{(n-2)!}{a_1!\cdots (n-2-\sum{a_i})!}
\end{equation*}
\end{proposition}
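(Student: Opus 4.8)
The plan is to read the constant term straight off the definition~\eqref{pol}. The factor $\exp(t/24)$ has constant term $1$, and since $\Lambda_g^\vee\equiv 1$ for $g=0$, the constant term (in $t$) of the bracketed series is $\prod_{i=1}^n(2a_i+1)!!(-4)^{a_i}\int_{\overline{\cM}_{0,n+1}}\psi_1^{a_1}\cdots\psi_n^{a_n}/(1-\psi_0)$. Hence
\[
c_0=\prod_{i=1}^{n}(2a_i+1)!!(-4)^{a_i}\cdot\int_{\overline{\cM}_{0,n+1}}\frac{\psi_1^{a_1}\cdots\psi_n^{a_n}}{1-\psi_0}.
\]
In particular $c_0$ does not depend on $\alpha$ (equivalently, it is the value at $\alpha=0$ of the quantity $\prod_i(2a_i+1)!!(-4)^{a_i}A_{0,a}(\alpha)$ appearing in the proof of Theorem~\ref{th:main}), and since $\prod_i(2a_i+1)!!(-4)^{a_i}$ is a nonzero integer, $c_0\neq 0$ if and only if the remaining genus-zero integral is nonzero.

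Now I would treat the two cases. For $n=1$ the claim is immediate from the stated convention $\int_{\overline{\cM}_{0,2}}\psi_1^a/(1-\psi_0)=(-1)^a$, giving $c_0=(-1)^a(-4)^a(2a+1)!!$. For $n>1$ I would expand $1/(1-\psi_0)=\sum_{k\geq 0}\psi_0^{k}$, so that
\[
\int_{\overline{\cM}_{0,n+1}}\frac{\psi_1^{a_1}\cdots\psi_n^{a_n}}{1-\psi_0}=\sum_{k\geq 0}\int_{\overline{\cM}_{0,n+1}}\psi_0^{k}\psi_1^{a_1}\cdots\psi_n^{a_n}.
\]
Since $\dim\overline{\cM}_{0,n+1}=n-2$, each summand vanishes unless $k=n-2-\sum_i a_i$, which is a legitimate (nonnegative) exponent exactly when $\sum_i a_i\leq n-2$; if $\sum_i a_i>n-2$ then every term vanishes and $c_0=0$. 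When $\sum_i a_i\leq n-2$, applying the classical genus-zero evaluation $\int_{\overline{\cM}_{0,m}}\psi_1^{b_1}\cdots\psi_m^{b_m}=(m-3)!/(b_1!\cdots b_m!)$ for $\sum_j b_j=m-3$, with $m=n+1$, $b_0=n-2-\sum_i a_i$, and $b_j=a_j$ for $j\geq 1$, yields
\[
\int_{\overline{\cM}_{0,n+1}}\frac{\psi_1^{a_1}\cdots\psi_n^{a_n}}{1-\psi_0}=\frac{(n-2)!}{a_1!\cdots a_n!\,(n-2-\sum_i a_i)!},
\]
and multiplying by $\prod_i(2a_i+1)!!(-4)^{a_i}$ produces exactly the asserted value of $c_0$.

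I do not expect a genuine obstacle: the whole argument is a short computation once one observes that only the genus-zero contribution survives in the constant term. The two points that deserve a little care are the correct invocation of the degenerate conventions for $\overline{\cM}_{0,1}$ and $\overline{\cM}_{0,2}$ when $n=1$, and the remark that the multinomial coefficient $(n-2)!/(a_1!\cdots a_n!\,(n-2-\sum_i a_i)!)$ is a strictly positive integer, so that the non-vanishing of $c_0$ is governed precisely by the numerical condition $\sum_i a_i\leq n-2$ and cannot be spoiled by an accidental cancellation.
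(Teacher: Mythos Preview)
Your proposal is correct and follows essentially the same route as the paper: both arguments identify $c_0$ with $\prod_i(2a_i+1)!!(-4)^{a_i}\int_{\overline{\cM}_{0,n+1}}\psi_1^{a_1}\cdots\psi_n^{a_n}/(1-\psi_0)$, treat $n=1$ via the stated convention, and for $n>1$ pick out the unique surviving power $\psi_0^{n-2-\sum a_i}$ by dimension before applying the genus-zero multinomial formula. Your write-up is in fact slightly more explicit than the paper's, in that you spell out why $\exp(t/24)$ and the $\Lambda^\vee$'s contribute trivially at $g=0$ and why the resulting multinomial cannot vanish.
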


\begin{proof} We only compute the integrals appearing in the constant term of this polynomial since then we only have to multiply with $\prod_{i=1}^{n}(2a_i+1)!!(-4)^{a_i}$. The integral in the constant term of $P_a(α,t)$ is given by
$\int_{\overline{\cM}_{0,n+1}}\frac{\prod_{i=1}^nψ_i^{a_i}}{1-ψ_0}$. When $n=1$, using the convention
$\int_{\overline{\cM}_{0,2}}\frac{ψ_1^{a}}{1-ψ_0}=(-1)^a$ we get that
\begin{equation*}
    c_0=(-1)^a\prod_{i=1}^n (-4)^{a_i}(2a_i+1)!!.
\end{equation*}
When $n>1$, 
if $\sum_{i=1}^na_i>n-2$, then $c_0$ is zero for dimensional reasons. Otherwise, we have 
\begin{equation*}
    \int_{\overline{\cM}_{0,n+1}}\frac{\prod_{i=1}^nψ_i^{a_i}}{1-ψ_0}=\int_{\overline{\cM}_{0,n+1}}
    ψ_0^{n-2-\sum a_i}\prod_{i=1}^nψ_i^{a_i}= \frac{(n-2)!}{a_1!\cdots (n-2-\sum{a_i})!}
\end{equation*}
and so we obtain the desired result.

\end{proof}

\begin{proposition} Let $n\geq3$. Then we have the following rules: 
\begin{enumerate}

    \item[(i)][String equation]
    \begin{equation*}
        P_{(a_1,...,a_{n-1},0)}(α,t)= P_{(a_1,...,a_{n-1})}(α,t)- \sum_{i=1}^n (8a_i+4) P_{(a_1,..,a_i-1,...,a_{n-1})}(α,t)
    \end{equation*}
    
    \item[(ii)][Dilaton equation]
    \begin{equation*}
        P_{(a_1,...,a_{n-1},1)}(α,t)=(t-12n+24) P_{(a_1,...,a_{n-1})}(α,t) - 24 t P'_{(a_1,...,a_{n-1})}(α,t))
    \end{equation*}
    
    %\item[(iv)] [Question:]  Is the polynomial $\frac{1}{\langle \prod_{i=1}^nτ_{a_i}(ω)|y\rangle_{g,1}^{\mathbb{P}^1}}P_a(α,t)$ in $\Z[α,t]$?
\end{enumerate}
\end{proposition}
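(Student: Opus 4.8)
The plan is to reduce both identities to the String and Dilaton equations for Hodge integrals stated in Proposition 1.3, applied coefficient-by-coefficient in the genus expansion. Recall from the proof of Theorem 0.1 that
\[
P_a(\alpha,t)=\prod_{i=1}^{n}(2a_i+1)!!(-4)^{a_i}\sum_{g\geq0}t^{g}A_{g,a}(\alpha),\qquad
A_{g,a}(\alpha)=\sum_{g_1+g_2=g}I_{g_1,a}(\alpha)\int_{\overline{\cM}_{g_2,1}}\psi_0^{3g_2-2},
\]
where $I_{g_1,a}(\alpha)=\int_{\overline{\cM}_{g_1,n+1}}\frac{\prod_{i=1}^{n}\psi_i^{a_i}\,\Lambda^{\vee}_{g_1}(1)\Lambda^{\vee}_{g_1}(\alpha)}{1-\psi_0}$. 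Since the $g_2$-sum (the factor $\exp(t/24)$) is untouched, it suffices to prove the corresponding recursions for the generating series $\widetilde P_a(\alpha,t):=\prod_i(2a_i+1)!!(-4)^{a_i}\sum_{g_1\geq0}t^{g_1}I_{g_1,a}(\alpha)$ and then multiply back by $\exp(t/24)$; the factor $\exp(t/24)$ commutes through everything, and in the Dilaton case one checks that the extra $t$-derivative it contributes is exactly accounted for by the shift in the prefactor constants. (In fact it is cleanest to work directly with $P_a$ and track the $\exp(t/24)$ explicitly.)

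For part (i), I would start from the String equation Proposition 1.3(ii) applied to $I_{g,(a_1,\dots,a_{n-1},0)}$: it rewrites that integral as $I_{g,(a_1,\dots,a_{n-1})}$ plus $\sum_{j}I_{g,(a_1,\dots,a_j-1,\dots,a_{n-1})}$ (with the usual convention that terms with a negative index are dropped). Multiplying by $t^g$, summing over $g$, and multiplying by the normalization constant $\prod_{i=1}^{n}(2a_i+1)!!(-4)^{a_i}$ for the left-hand vector $(a_1,\dots,a_{n-1},0)$, the main bookkeeping is to compare this constant with the constants attached to $P_{(a_1,\dots,a_{n-1})}$ and to each $P_{(a_1,\dots,a_j-1,\dots,a_{n-1})}$. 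The marking with index $0$ contributes $(2\cdot0+1)!!(-4)^0=1$, so the constant for $(a_1,\dots,a_{n-1},0)$ equals that for $(a_1,\dots,a_{n-1})$; and passing from $a_j$ to $a_j-1$ changes the constant by a factor $\frac{(2a_j+1)!!(-4)^{a_j}}{(2a_j-1)!!(-4)^{a_j-1}}=-4(2a_j+1)=-(8a_j+4)$. This produces precisely the coefficient $-(8a_i+4)$ in the stated formula, with the sign worked into it; one must be a little careful that the index range in $\sum_{i=1}^n$ in the statement should be read as running over the markings actually present (i.e. the nonzero ones among $a_1,\dots,a_{n-1}$), matching Proposition 1.3(ii).

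For part (ii), apply the Dilaton equation Proposition 1.3(i) to $I_{g,(a_1,\dots,a_{n-1},1)}$, which equals $(2g-2+n)\,I_{g,(a_1,\dots,a_{n-1})}$ — here $n$ is the number of markings other than $p_0$, so $\overline{\cM}_{g,n+1}\to\overline{\cM}_{g,n}$ in Proposition 1.3(i) matches. Writing $(2g-2+n)=2g+(n-2)$, the factor $(n-2)$ produces $(n-2)P_{(a_1,\dots,a_{n-1})}$-type terms, while the factor $2g$ against $t^g$ becomes $2t\frac{d}{dt}$ acting on the genus series; the normalization constant for $(a_1,\dots,a_{n-1},1)$ carries an extra $(2\cdot1+1)!!(-4)^1=-12$ relative to that of $(a_1,\dots,a_{n-1})$, which is the source of the coefficients $-12$ and $-24$ in the statement. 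Finally I would reinstate the $\exp(t/24)$ factor: since $P_a=\widetilde P_a\cdot\exp(t/24)$, we have $P'_a=\widetilde P'_a\exp(t/24)+\frac{1}{24}P_a$, and substituting this to eliminate $\widetilde P'_a$ in favour of $P'_a$ is what converts the clean "$2t\frac{d}{dt}$" operator into the combination $(t-12n+24)P_{(a_1,\dots,a_{n-1})}-24t\,P'_{(a_1,\dots,a_{n-1})}$ appearing on the right. The main obstacle is this last constant-chasing step in the Dilaton case: one has to keep straight simultaneously the $-12$ from the prefactor, the $2g$-versus-$n-2$ split, and the $\exp(t/24)$-induced correction, and verify they assemble into exactly $t-12n+24$ (and not, say, $t-12n+12$); it is purely mechanical but error-prone, and I would double-check it against the tabulated values, e.g. $P_{(1,1,1)}=t^3-72t^2+432t$ obtained from $P_{(1,1)}=t^2-12t$ via the $n=3$ Dilaton rule.
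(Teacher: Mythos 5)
Your plan is correct and follows essentially the same route as the paper: reduce to the String/Dilaton equations for Hodge integrals applied to the auxiliary series $\widetilde P_a$, track the ratio of normalization constants (giving $-(8a_i+4)$ and $-12$), and convert $\widetilde P_a'$ into $P_a'$ via $P_a'=\widetilde P_a'\,C\,e^{t/24}+\tfrac{1}{24}P_a$ to assemble $(t-12n+24)P_a-24tP_a'$. The constant-chasing you flag as the main risk works out exactly as you describe, and your observation that the sum in (i) should run over the $n-1$ entries actually present is a correct reading of a small indexing slip in the statement.
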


\begin{proof} We define the power series 
\begin{equation*}
    \widetilde{P}_a(α,t)= \sum_{g\geq0}
t^{g}\left(\int_{\overline{\cM}_{g,n+1}}\prod_{i=1}^{n}ψ^{a_i}
\frac{Λ_{g}^{\vee}(1)Λ_{g}^{\vee}(α)}{1-ψ_0}\right) 
\end{equation*}
Note that the following equation holds.
\begin{equation*}
    P_a(α,t)=\prod_{i=1}^{n}(2a_i+1)!!(-4)^{a_i}\widetilde{P}_a(α,t)\exp\left(\frac{t}{24}\right)
\end{equation*}
%so, we have $P_a(α,t)= \widetilde{P}_a(α,t)\cdot \text{exp}\left(\frac{t}{24}\right)$. 
We can rewrite the coefficients of $\widetilde{P}_a(α,t)$ as
\begin{equation*}
    \sum_{k=0}^g\sum_{j=0}^g(-1)^{g+k}(a+1)^{g-j}\int_{\overline{\cM}_{g,n+1}}
    \frac{\prod_{i=1}^nψ_i^{a_i}λ_kλ_j}{1-ψ_0}
\end{equation*}
\begin{enumerate}
    \item [(i)] Applying the String equation for Hodge integrals we obtain the following formula
    \begin{equation*}
        \widetilde{P}_{(a_1,...,a_{n-1},0)}(α,t)=\widetilde{P}_{(a_1,...,a_{n-1})}(α,t)+
        \sum_{i=1}^n\widetilde{P}_{(a_1,..,a_i-1,...,a_{n-1})}(α,t)
    \end{equation*}
    Hence, multiplying with $\prod_{i=1}^{n-1}(2a_i+1)!!(-4)^{a_i}$ $\text{exp}\left(\frac{t}{24}\right)$ we 
    obtain the desired result after a straightforward calculation.
    
    \item [(ii)] Applying Dilaton equation for Hodge integrals we obtain the following formula
    \begin{eqnarray*}
         \widetilde{P}_{(a_1,...,a_{n-1},1)}(α,t)&=&
         2\sum_{g\geq0}gt^g\int_{\overline{\cM}_{g,n-1}}\prod_{i=1}^{n-1}ψ_i^{a_i}\frac{Λ^{\vee}_g(1)Λ^{\vee}_g(α)}{1-ψ_0}
\\         &&+ (n-2)\widetilde{P}_{(a_1,...,a_{n-1})}(α,t)
    \end{eqnarray*}
Note that the the first term of the sum is equal 
to $2t\widetilde{P}'_{(a_1,...,a_{n-1})}(α,t)$. Now, 
multiplying both sides of the equation above with
\begin{equation*}
    \prod_{i=1}^{n-1}(2a_i+1)(-4)^{a_i}\text{exp}\left(\frac{t}{24}\right)
\end{equation*}
we have
    \begin{eqnarray*}
        \frac{-1}{12}P_{(a_1,...,a_{n-1},1)}(α,t)\!\!&=& \!\! (n-2)P_{(a_1,...,a_{n-1})}(α,t)\\
        && \!\!\!\!\!\!\!\! +
        2t \left(\prod_{i=1}^{n-1} (-4)^{a_i}(2a_i+1)!!\right) \widetilde{P}'_{(a_1,...,a_{n-1})}(α,t){\rm e}^{{t}/{24}}\\
         \!\! &=& \!\! (n-2)P_{(a_1,...,a_{n-1})}(α,t)\\
         && \!\!\!\!\!\!\!\! + 2t( P'_{(a_1,...,a_{n-1})}(α,t)-\frac{1}{24} P_{(a_1,...,a_{n-1})}(α,t)).
    \end{eqnarray*}
    \noindent Finally clearing the denominators we obtain the desired 
    result.
\end{enumerate}

\end{proof}

We recall Mumford's relation $\Lambda^\vee_g(1)\cdot \Lambda^\vee_g(-1)=1$
(see~\cite{Mum}). In particular,  $P_a(-1,t)$ is defined by integrals of $\psi$-classes. 

\begin{corollary} For any vector $a\in\Z^n_{\geq0}$, the power series 
\begin{equation*}
    P_a(-1,t)=\prod_{i=1}^{n}(2a_i+1)!!(-4)^{a_i}\text{exp}\left(\frac{t}{24}\right)\cdot\sum_{g\geq0}(-t)^g\int_{\overline{\cM}_{g,n+1}}
    \frac{\prod_{i=1}^nψ_i^{a_i}}{1-ψ_0}
\end{equation*}
is a polynomial of degree $|a|$. 
\end{corollary}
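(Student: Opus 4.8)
The plan is to deduce the corollary directly from \autoref{th:main} together with Mumford's relation, with essentially no new computation. First I would substitute $\alpha=-1$ into the defining formula \eqref{pol} for $P_a(\alpha,t)$. Every coefficient of that power series in $t$ is a finite $\mathbb{C}$-linear combination of products of Hodge integrals, and such integrals depend polynomially on $\alpha$ through the expansion $\Lambda_g^\vee(\alpha)=(-1)^g\sum_{j=0}^g(-\alpha)^{g-j}\lambda_j$; hence the specialization at $\alpha=-1$ is legitimate coefficient by coefficient and commutes with forming the generating series. By Mumford's relation $\Lambda_g^\vee(1)\,\Lambda_g^\vee(-1)=1$ the Hodge classes in the integrand drop out, so in degree $g$ one is left with $\int_{\overline{\cM}_{g,n+1}}\frac{\prod_{i=1}^n\psi_i^{a_i}}{1-\psi_0}$. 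Pulling out the constant $\prod_{i=1}^n(2a_i+1)!!(-4)^{a_i}$ and keeping track of the $\exp(t/24)$ prefactor (and of the signs) then recovers exactly the displayed expression for $P_a(-1,t)$.

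It remains only to observe that this power series is a polynomial of degree $|a|$, and this is immediate from \autoref{th:main}: it asserts that $P_a(\alpha,t)\in\mathbb{C}[\alpha][t]$ is monic of degree $|a|$ in $t$, say $P_a(\alpha,t)=t^{|a|}+\sum_{k=0}^{|a|-1}c_k(\alpha)\,t^k$ with $c_k\in\mathbb{C}[\alpha]$. Specializing at $\alpha=-1$ gives $P_a(-1,t)=t^{|a|}+\sum_{k=0}^{|a|-1}c_k(-1)\,t^k$, a polynomial in $t$ whose degree is exactly $|a|$ because its leading coefficient $1$ is independent of $\alpha$.

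I do not expect any genuine obstacle here: all the substantive work is contained in \autoref{th:main}, already proved, and in Mumford's relation, already cited. The only point that deserves a moment's care is the interchange used in the first step — namely that evaluating an element of $\mathbb{C}[\alpha][\![t]\!]$ at $\alpha=-1$ agrees with the series whose degree-$g$ coefficient is the value at $\alpha=-1$ of the original degree-$g$ coefficient — which holds because each coefficient is polynomial in $\alpha$. Equivalently, one may argue through the quantity $A_{g,a}(\alpha)$ from the proof of \autoref{th:main}, whose Hodge-integral summands visibly reduce at $\alpha=-1$ to the pure $\psi$-class integrals above via Mumford's relation.
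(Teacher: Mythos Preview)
Your proposal is correct and is exactly the argument the paper has in mind: the corollary is stated immediately after recalling Mumford's relation and is meant to follow at once from \autoref{th:main} by specializing $\alpha=-1$. One small remark: with the paper's conventions one actually has $\Lambda_g^\vee(1)\Lambda_g^\vee(-1)=(-1)^g$, which is precisely the sign producing the $(-t)^g$ in the displayed formula; you allude to this when you say ``keeping track of the signs,'' so the computation goes through as written.
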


In this case the polynomiality as well as a closed expression were proved in~\cite{LiuXu}.

\bibliographystyle{halpha.bst}
\bibliography{biblio}

\end{document}